\providecommand{\U}[1]{\protect\rule{.1in}{.1in}}
\newtheorem{theorem}{Theorem}[section]
\newtheorem{proposition}[theorem]{Proposition}
\newtheorem{corollary}[theorem]{Corollary}
\newtheorem{example}[theorem]{Example}
\newtheorem{final remark}[theorem]{Final Remark}
\newtheorem{definition}[theorem]{Definition}
\begin{document}

\title{Injective polynomial ideals and the domination property}
\author{Geraldo Botelho\thanks{Supported by CNPq Grant
304262/2018-8 and Fapemig Grant PPM-00450-17.}\,\, and  Leodan A. Torres\thanks{Supported by a CNPq scholarship\newline 2010 Mathematics Subject Classification: 47L22, 46G25, 47B10, 47B33..
}}
\date{}
\maketitle

\begin{abstract} After sketching the basic theory of injective ideals of homogeneous polynomials, we characterize injective polynomial ideals by means of a domination property and applications of this characterization to some classical operator ideals and to composition polynomial ideals are provided.
\end{abstract}

\section{Introduction and background}

As a consequence of the successful theory of ideals of linear operators (operator ideals), ideals of continuous homogeneous polynomials between Banach spaces have been intensively studied since Pietsch \cite{pietsch} introduced the concept of ideals of multilinear operators. Contrary to the case of surjective polynomial ideals, which were thoroughly investigated in \cite{berrios}, injective polynomial ideals have not been studied yet. The aim of this note is to fill this gap.

In Section 2 we outline the basic theory of injective polynomial ideals. We give the definition, provide illustrative examples, characterize injective polynomial ideals by means of the injective hull and establish the properties of a hull procedure. The main results of the paper appear in Section 3. Inspired by the fact that injective operator ideals are characterized by a domination property, we investigate the situation in the polynomial case. First we announce that, by means of counterexample that will appear at the end of  the paper, the polynomial analogue of the linear domination property does not characterize injective polynomial ideals. One of our main results is the identification of a related domination property that characterizes injective polynomial ideals. A first application of this result is the characterization of injective composition polynomial ideals, a class that encompasses some classical polynomial ideals. Several applications follow, involving the ideals of finite rank, approximable, compact and weakly operators/polynomials, the polynomial dual of an operator ideal and the ideals of $p$-compact and Cohen strongly $p$-summing linear operators.

For Banach spaces $E$ and $F$, $B_E$ denotes the closed unit ball of $E$, $E^*$ denotes the topological dual of $E$, ${\cal L}(E;F)$ is the space of bounded linear operators from $E$ to $F$ endowed with the usual sup norm, ${\cal P}(^mE;F)$ is the space of continuous $m$-homogeneous polynomials from $E$ to $F$. A metric injection is a linear operator $j \colon E \longrightarrow F$ such that $\|j(x)\| = \|x\|$ for every $x \in E$. The metric injection
$$I_E\colon E \longrightarrow \ell_{\infty}(B_{E^*})~,~I_{E}(x)=\left(\varphi(x)\right)_{\varphi\in B_{E^*}},$$
is called the canonical metric injection.

Operator ideals will be taken in the sense of Pietsch \cite{df, djp, livropietsch}, ideals of homogeneous polynomials (polynomial ideals) in the sense of \cite{fg, fh}  and polynomial hyper-ideals in the sense of \cite{ewertonlaa}. For the sake of the reader, we recall these concepts next.

\begin{definition}\rm Let ${\cal Q}$ be a subclass of the class of homogeneous polynomials between Banach spaces such that, for every $m$ and any Banach spaces $E$ and $F$, the component
$${\cal Q}(^mE;F) := {\cal P}(^mE;F) \cap {\cal Q} $$
is a linear subspace of ${\cal P}(^mE;F)$ containing the polynomials of finite type. The class $\cal Q$ is said to be:\\
(a) A {\it polynomial ideal} if $t \circ P \circ u \in {\cal Q}(^mE;H)$ whenever $t \in {\cal L}(G;H)$, $P \in {\cal Q}(^mF;G)$ and $u \in {\cal L}(E;F)$.\\
(b) A {\it polynomial hyper-ideal} if $t \circ P \circ Q \in {\cal Q}(^{mn}E;H)$ whenever $t \in {\cal L}(G;H)$, $P \in {\cal Q}(^mF;G)$ and $Q \in {\cal P}(^nE;F)$.

Suppose that there is a function $\|\cdot\|_{\cal Q} \colon {\cal Q} \longrightarrow \mathbb{R}$ whose restriction to each component ${\cal Q}(^mE;F)$ is a norm such that $\|\lambda \in \mathbb{K} \mapsto \lambda^m\|_{\cal Q} = 1$ for every $m$. $({\cal Q}, \|\cdot\|_{\cal Q})$ is said to be:\\
(a') A {\it normed polynomial ideal} if, in (a), $\|t \circ P \circ u\|_{\cal Q} \leq \|t\|\cdot\|P\|_{\cal Q}\cdot \|u\|$.\\
(b') A {\it normed polynomial hyper-ideal ideal} if, in (b), $\|t \circ P \circ Q\|_{\cal Q} \leq \|t\|\cdot\|P\|_{\cal Q}\cdot \|Q\|^m$.

If each component $({\cal Q}(^m E;F), \|\cdot\|_{\cal Q})$ is a Banach space, then $({\cal Q}, \|\cdot\|_{\cal Q})$ is said to be a {\it Banach polynomial ideal} or a {\it Banach polynomial hyper-ideal}. If the norm is the usual sup norm, we speak of a {\it closed polynomial ideal} or a {\it closed polynomial hyper-ideal}.
\end{definition}

The $m^{\rm th}$-component of a polynomial ideal $\cal Q$, defined as
$${\cal Q}_m = \textstyle\bigcup\limits_{E,F} {\cal Q}(^mE;F), $$
is called a (normed, Banach, closed) ideal of $m$-homogeneous polynomials. Of course, its linear component ${\cal Q}_1$ is an operator ideal.

Just to mention a few illustrative examples, the class of nuclear polynomials is a Banach polynomial ideal that fails to be a hyper-ideal and the classes of compact and weakly compact polynomials are closed hyper-ideals.

For the basic theory of homogeneous polynomials we refer to \cite{livrodineen, mujica}. 
\section{Injective polynomial ideals}

Like in the linear case, a polynomial ideal is injective if the containment of a polynomial in the class depends on the norm of the target space rather than on the space itself.

\begin{definition}\rm A polynomial ideal $\cal Q$ is said to be {\it injective} if $P\in \mathcal{Q}(^m E;F)$ whenever $P\in\mathcal{P}(^m E;F)$ and $j \colon F \longrightarrow G$ is a metric injection such that $j\circ P\in \mathcal{Q}(^m E;G)$.

A normed polynomial ideal $\left(\mathcal{Q},\|\cdot\|_{\mathcal{Q}}\right)$ is {\it injective} if $\cal Q$ is an injective polynomial ideal and, in the situation above, $\|P\|_{\cal Q} = \|j\circ P\|_{\cal Q}$. 
\end{definition}

\begin{example}\rm It is easy to check that the ideals ${\cal P}_{\cal F}$ of finite rank polynomials (the range of the polynomial generates a is finite-dimensional subspace of the target space), ${\cal P}_{\cal K}$ of compact polynomials (bounded sets are sent to relatively compact sets) and ${\cal P}_{\cal W}$ of weakly compact polynomials (bounded sets are sent to relatively weakly compact sets) are injective. In Corollary \ref{approxima} we shall prove that the ideal of approximable polynomials, the ones that can be approximated, in the usual sup norm, by finite rank polynomials, is not injective. It is obvious that all ideals of polynomials of summing type (absolutely summing, dominated, strongly summing, multiple summing, etc) are injective. Corollary \ref{corol} provides plenty of injective and non-injective polynomial ideals.
\end{example}

We aim to characterize injective polynomial ideals by the coincidence with its injective hull.

\begin{proposition}\label{inje}{\rm (a)} Let $\mathcal{Q}$ be a polynomial ideal (polynomial hyper-ideal, respectively). Then there exists a unique smallest injective polynomial ideal (hyper-ideal, respectively) $\mathcal{Q}^{inj}$ containing $\mathcal{Q}$. For 
$P\in\mathcal{P}(^mE;F)$,
$$P\in\mathcal{Q}^{inj}(^mE;F)\Longleftrightarrow I_F\circ P\in\mathcal{Q}(^mE; \ell_\infty(B_{F^*}),$$
where $I_F$ is the canonical metric injection.\\
{\rm (b)} Let $\left(\mathcal{Q},\|\cdot\|_{\mathcal{Q}}\right)$ be normed (Banach)  polynomial ideal (polynomial hyper-ideal). Then there exists a unique smallest normed (Banach) injective polynomial ideal (polynomial hyper-ideal) $(\mathcal{Q}^{inj}, \|\cdot\|_{\mathcal{Q}^{inj}})$ containing $\mathcal{Q}$ and such that $\|\cdot\|_{\mathcal{Q}^{inj}} \leq \|\cdot\|_{\cal Q}$. For 
$P\in\mathcal{P}(^mE;F)$,
\begin{equation}\label{eqdef}P\in\mathcal{Q}^{inj}(^mE;F)\Longleftrightarrow I_F\circ P\in\mathcal{Q}(^mE; \ell_\infty(B_{F^*}) {\rm ~~ and~~}\|P\|_{\mathcal{Q}^{inj}}:=\|I_F\circ P\|_{\mathcal{Q}}.
\end{equation}
\end{proposition}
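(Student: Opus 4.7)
The plan is to imitate the classical construction of the injective hull of an operator ideal, taking the formula in \eqref{eqdef} as the \emph{definition} of $\mathcal{Q}^{inj}$ (and of its norm in part (b)) and then verifying, in order: that this formula describes a polynomial ideal (hyper-ideal), that it contains $\mathcal{Q}$ with the announced norm inequality, that it is injective, and that it is minimal among injective polynomial ideals containing $\mathcal{Q}$. Completeness in the Banach case will then drop out from the definition.

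The key technical ingredient that I would isolate first is the following factorization, which exploits the universal role of $I_F$: for every metric injection $j\colon F\to G$, by Hahn--Banach every $\varphi\in B_{F^*}$ extends to some $\widetilde\varphi\in B_{G^*}$, so the map $A\colon G\to\ell_\infty(B_{F^*})$, $A(y)=(\widetilde\varphi(y))_{\varphi\in B_{F^*}}$, satisfies $\|A\|\le 1$ and $I_F=A\circ j$. Moreover, since $\ell_\infty(B_{F^*})$ is $1$-injective and $I_G$ is a metric injection, $A$ extends to some $B\colon \ell_\infty(B_{G^*})\to \ell_\infty(B_{F^*})$ with $\|B\|\le 1$ and $A=B\circ I_G$. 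Equivalently, for every $P\in\mathcal{P}(^mE;F)$ one has $I_F\circ P=B\circ I_G\circ j\circ P$. Specialising $j$ appropriately gives, for any $t\in\mathcal{L}(G;H)$, a factorization $I_H\circ t=R\circ I_G$ with $\|R\|\le\|t\|$.

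With this lemma the ideal axioms are routine: for $t\in\mathcal{L}(G;H)$, $P\in\mathcal{Q}^{inj}(^mF;G)$ and $u\in\mathcal{L}(E;F)$ (resp.\ $Q\in\mathcal{P}(^nE;F)$ in the hyper-ideal case) one writes $I_H\circ(t\circ P\circ u)=R\circ(I_G\circ P)\circ u$, which lies in $\mathcal{Q}$ with the correct norm estimate, so $t\circ P\circ u\in\mathcal{Q}^{inj}$. Finite-type polynomials are handled directly and the vector-space structure is inherited from $\mathcal{Q}$. The inclusion $\mathcal{Q}\subseteq\mathcal{Q}^{inj}$ with $\|\cdot\|_{\mathcal{Q}^{inj}}\le\|\cdot\|_{\mathcal{Q}}$ is the ideal axiom for $\mathcal{Q}$ applied to the operator $I_F$ of norm $1$. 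For injectivity, if $j\circ P\in\mathcal{Q}^{inj}$ then $I_G\circ j\circ P\in\mathcal{Q}$, and applying the factorization $I_F\circ P=B\circ I_G\circ j\circ P\in\mathcal{Q}$ gives $P\in\mathcal{Q}^{inj}$; the norm equality $\|P\|_{\mathcal{Q}^{inj}}=\|j\circ P\|_{\mathcal{Q}^{inj}}$ follows from the estimate $\|P\|_{\mathcal{Q}^{inj}}\le\|B\|\cdot\|I_G\circ j\circ P\|_{\mathcal{Q}}\le\|j\circ P\|_{\mathcal{Q}^{inj}}$ together with the reverse inequality coming from the ideal axiom applied to $j$. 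Minimality is then transparent: if $\mathcal{R}$ is any injective (normed) polynomial ideal with $\mathcal{Q}\subseteq\mathcal{R}$ (and $\|\cdot\|_{\mathcal{R}}\le\|\cdot\|_{\mathcal{Q}}$), then $P\in\mathcal{Q}^{inj}$ forces $I_F\circ P\in\mathcal{Q}\subseteq\mathcal{R}$, and injectivity of $\mathcal{R}$ at the metric injection $I_F$ gives $P\in\mathcal{R}$ with $\|P\|_{\mathcal{R}}\le\|P\|_{\mathcal{Q}^{inj}}$.

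For completeness in the Banach case, if $(P_n)$ is Cauchy in $\mathcal{Q}^{inj}(^mE;F)$ then $(I_F\circ P_n)$ is Cauchy in $\mathcal{Q}(^mE;\ell_\infty(B_{F^*}))$, hence converges to some $Q$; since $I_F$ is an isometric embedding, $(P_n)$ converges in the usual sup norm to a polynomial $P\in\mathcal{P}(^mE;F)$ with $I_F\circ P=Q$, so $P\in\mathcal{Q}^{inj}$ with $\|P_n-P\|_{\mathcal{Q}^{inj}}\to 0$. The hyper-ideal version requires no new ideas; one only replaces the right-hand $u$ by a polynomial $Q$ in the ideal-axiom verification. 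The single non-trivial step is the factorization lemma of the second paragraph --- that is, the $1$-injectivity of $\ell_\infty(B_{F^*})$ combined with the Hahn--Banach extension of functionals; once it is in place, the rest is a direct transcription of the well-known linear argument to the polynomial setting.
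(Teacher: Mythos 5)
Your proposal is correct and follows essentially the same route as the paper: $\mathcal{Q}^{inj}$ is defined by the formula in (\ref{eqdef}), and the crucial step in both arguments is the metric extension property ($1$-injectivity) of the spaces $\ell_\infty(B)$, used to factor $I_H\circ t$ (resp.\ $I_F$) through $I_G$ so that the (hyper-)ideal property and injectivity can be pulled back to $\mathcal{Q}$. The only difference is one of detail: the paper verifies the hyper-ideal property and dismisses the remaining points as standard, while you spell out the injectivity, minimality and completeness arguments explicitly; these match the expected standard arguments.
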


The ideal $\mathcal{Q}^{inj}$ (normed, Banach ideal $(\mathcal{Q}^{inj}, \|\cdot\|_{\mathcal{Q}^{inj}})$) is called the {\it injective hull} of the ideal $\cal Q$ (normed, Banach ideal $\left(\mathcal{Q},\|\cdot\|_{\mathcal{Q}}\right)$).

\begin{proof} $\mathcal{Q}^{inj}$ and $\|\cdot\|_{\mathcal{Q}^{inj}}$ are defined according to (\ref{eqdef}). We check only the hyper-ideal property, the other statements follow from standard arguments. Let $Q\in \mathcal{P}(^nE;F)$, $P\in \mathcal{Q}^{inj}(^m F;G)$ and $t\in\mathcal{L}(G;H)$ be given. By the definition of  $\mathcal{Q}^{inj}(^m F;G)$, $I_G\circ P\in\mathcal{Q}(^mF; \ell_\infty(B_{G^*})$. Since $I_G $ is a metric injection, an application of the metric extension property of   $\ell_\infty(B_{H^{*}})$ \cite[Proposition C.3.2.1]{livropietsch} to the operator $I_H\circ t$ gives rise to an operator $s\in\mathcal{L}(\ell_\infty(B_{G^{*}});\ell_\infty(B_{H^{*}}))$ such that $I_H\circ t=s\circ I_G$ and $\|s\|=\|I_H\circ t\|$.
$$
\xymatrix{
E \ar[r]^Q & F\ar[r]^P & G \ar[r]^t \ar[d]_{I_G} & H\ar[r]^{I_H} & \ell_\infty(B_{H^{*}})\\
 &  & \ell_\infty(B_{G^{*}})\ar[rru]^s &  &
}
$$
Therefore,
$$I_H\circ t\circ P\circ Q=(s\circ I_G)\circ P\circ Q=s\circ(I_G\circ P)\circ Q \in \mathcal{Q}(^{mn}E; \ell_\infty(B_{H^*})),$$
that is, $t\circ P\circ Q\in \mathcal{Q}^{inj}(^{mn} E;H)$ and
\begin{align*}
\|t\circ P\circ Q\|_{\mathcal{Q}^{inj}} & = \|I_H\circ t\circ P\circ Q\|_{\mathcal{Q}}= \|s\circ I_G\circ P\circ Q\|_{\mathcal{Q}}\leq \|s\| \cdot\|I_G\circ P\|_{\mathcal{Q}}\cdot \|Q\|^m \\
&= \|I_H\circ t\| \cdot\|I_G\circ P\|_{\mathcal{Q}} \cdot\|Q\|^m\leq \|t\| \cdot\|P\|_{\mathcal{Q}^{inj}} \cdot\|Q\|^m.
\end{align*}
\end{proof}

\begin{corollary}\label{corinje} {\rm (a)} A polynomial ideal (hyper-ideal) $\cal Q$ is injective if and only if ${\cal Q} = \mathcal{Q}^{inj}$.\\
{\rm (b)} A normed (Banach) polynomial ideal (hyper-ideal) is injective if and only if ${\cal Q} = \mathcal{Q}^{inj}$ isometrically.
\end{corollary}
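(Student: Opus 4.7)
The plan is to exploit the explicit characterization of the injective hull obtained in Proposition \ref{inje}, namely that $P\in\mathcal{Q}^{inj}(^mE;F)$ if and only if $I_F\circ P\in\mathcal{Q}(^mE;\ell_\infty(B_{F^*}))$, together with the fact (built into the conclusion of Proposition \ref{inje}) that $\mathcal{Q}\subseteq\mathcal{Q}^{inj}$ always holds and $\mathcal{Q}^{inj}$ is itself an injective (normed) ideal. Both directions of each equivalence then reduce to a one-line observation.

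For (a), if $\mathcal{Q}=\mathcal{Q}^{inj}$ then $\mathcal{Q}$ is injective simply because $\mathcal{Q}^{inj}$ is. Conversely, suppose $\mathcal{Q}$ is injective; it suffices to show $\mathcal{Q}^{inj}\subseteq\mathcal{Q}$. Given $P\in\mathcal{Q}^{inj}(^mE;F)$, the characterization yields $I_F\circ P\in\mathcal{Q}(^mE;\ell_\infty(B_{F^*}))$, and since $I_F$ is a metric injection, the assumed injectivity of $\mathcal{Q}$ forces $P\in\mathcal{Q}(^mE;F)$.

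For (b), the set-theoretic equality is handled exactly as in (a), so what is new is the comparison of norms. If $\mathcal{Q}=\mathcal{Q}^{inj}$ isometrically, then $\mathcal{Q}$ is injective as a normed ideal because $\mathcal{Q}^{inj}$ is. Conversely, assuming $\mathcal{Q}$ is injective as a normed ideal, for any $P\in\mathcal{Q}(^mE;F)=\mathcal{Q}^{inj}(^mE;F)$ I would write
\[
\|P\|_{\mathcal{Q}^{inj}}=\|I_F\circ P\|_{\mathcal{Q}}=\|P\|_{\mathcal{Q}},
\]
where the first equality is the definition of $\|\cdot\|_{\mathcal{Q}^{inj}}$ and the second is the norm-preservation clause in the definition of an injective normed ideal, applied to the metric injection $I_F$. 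There is no real obstacle here; the corollary is essentially a repackaging of Proposition \ref{inje}, whose content (constructing the hull and verifying that it is itself injective) has already done all the substantive work.
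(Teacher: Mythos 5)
Your argument is correct and is exactly the derivation the paper intends: the corollary is stated without proof as an immediate consequence of Proposition \ref{inje}, and your use of the characterization $P\in\mathcal{Q}^{inj}(^mE;F)\Leftrightarrow I_F\circ P\in\mathcal{Q}(^mE;\ell_\infty(B_{F^*}))$ together with the injectivity of the hull (and, for (b), the ideal property giving $I_F\circ P\in\mathcal{Q}$ before invoking the isometric clause) fills in precisely the expected details.
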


Next we check that the correspondence ${\cal Q} \mapsto {\cal Q}^{inj}$ is a hull procedure in the sense of \cite[8.1.2]{livropietsch}. We state only the case of normed/Banach polynomial ideals/hyper-ideals. The non-normed case is a straightforward consequence.

\begin{proposition}\label{hull} Let $(\mathcal{Q},\|\cdot\|_{\mathcal{Q}})$ and $(\mathcal{R},\|\cdot\|_{\mathcal{R}})$ be normed (Banach) polynomial ideals (polynomial hyper-ideals). Then:\\
{\rm (a)} $\left(\mathcal{Q}^{inj},\|\cdot\|_{\mathcal{Q}^{inj}}\right)$ is a normed (Banach) polynomial ideal (polynomial hyper-ideal).\\
{\rm (b)} If $\mathcal{Q}\subseteq \mathcal{R}$ and $\|\cdot\|_{\mathcal{R}}\leq\|\cdot\|_{\mathcal{Q}}$, then $\mathcal{Q}^{inj}\subseteq \mathcal{R}^{inj}$ and $\|\cdot\|_{\mathcal{R}^{inj}}\leq\|\cdot\|_{\mathcal{Q}^{inj}}$.\\
{\rm (c)} $(\mathcal{Q}^{inj})^{inj}=\mathcal{Q}^{inj}$ and $\|\cdot\|_{\left(\mathcal{Q}^{inj}\right)^{inj}}=\|\cdot\|_{\mathcal{Q}^{inj}}$.\\
{\rm (d)} $\mathcal{Q}\subseteq\mathcal{Q}^{inj}$ and $\|\cdot\|_{\mathcal{Q}^{inj}} \leq\|\cdot\|_{\mathcal{Q}}$.
\end{proposition}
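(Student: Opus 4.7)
The plan is to exploit the explicit formula (\ref{eqdef}) together with Proposition \ref{inje} and Corollary \ref{corinje}; three of the four assertions then reduce to short verifications, and only the completeness portion of (a) demands a bit of care.

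For (d), given $P \in \mathcal{Q}(^mE;F)$, the ideal axiom applied to the metric injection $I_F$ yields $I_F \circ P \in \mathcal{Q}(^mE;\ell_\infty(B_{F^*}))$ with $\|I_F \circ P\|_{\mathcal{Q}} \leq \|I_F\|\cdot\|P\|_{\mathcal{Q}} = \|P\|_{\mathcal{Q}}$, so (\ref{eqdef}) immediately gives $P \in \mathcal{Q}^{inj}(^mE;F)$ and $\|P\|_{\mathcal{Q}^{inj}} \leq \|P\|_{\mathcal{Q}}$. For (b), if $P \in \mathcal{Q}^{inj}(^mE;F)$ then $I_F \circ P \in \mathcal{Q}(^mE;\ell_\infty(B_{F^*})) \subseteq \mathcal{R}(^mE;\ell_\infty(B_{F^*}))$ by the hypothesis $\mathcal{Q} \subseteq \mathcal{R}$, whence $P \in \mathcal{R}^{inj}(^mE;F)$; the norm comparison $\|P\|_{\mathcal{R}^{inj}} = \|I_F \circ P\|_{\mathcal{R}} \leq \|I_F \circ P\|_{\mathcal{Q}} = \|P\|_{\mathcal{Q}^{inj}}$ is immediate from $\|\cdot\|_{\mathcal{R}} \leq \|\cdot\|_{\mathcal{Q}}$. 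Part (c) follows directly from Corollary \ref{corinje}: since Proposition \ref{inje} already shows $\mathcal{Q}^{inj}$ to be injective, applying the corollary to $\mathcal{Q}^{inj}$ in place of $\mathcal{Q}$ gives $(\mathcal{Q}^{inj})^{inj} = \mathcal{Q}^{inj}$ isometrically.

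The remaining item is (a). The fact that $\mathcal{Q}^{inj}$ is a polynomial ideal (respectively hyper-ideal) with the announced norm inequality is already contained in the proof of Proposition \ref{inje}, the ideal case being the specialisation of the hyper-ideal diagram to a linear inner map. The componentwise norm axioms are inherited from $\mathcal{Q}$ through the linear isometric embedding $P \mapsto I_F \circ P$ of $\mathcal{Q}^{inj}(^mE;F)$ into $\mathcal{Q}(^mE;\ell_\infty(B_{F^*}))$; the normalization $\|\lambda \mapsto \lambda^m\|_{\mathcal{Q}^{inj}} = 1$ is checked by applying (\ref{eqdef}) with $F = \mathbb{K}$, noting that $I_{\mathbb{K}}$ is a metric injection.

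The single delicate point, and the main obstacle, is completeness of $\mathcal{Q}^{inj}(^mE;F)$ when $\mathcal{Q}$ is a Banach polynomial (hyper-)ideal. My plan is to identify it, via the embedding above, with the set $\{T \in \mathcal{Q}(^mE;\ell_\infty(B_{F^*})) : T(E) \subseteq I_F(F)\}$ and to show this set is closed in $\|\cdot\|_{\mathcal{Q}}$. Given a Cauchy sequence $(P_n)$ in $\mathcal{Q}^{inj}(^mE;F)$, the sequence $(I_F \circ P_n)$ is Cauchy in $\mathcal{Q}(^mE;\ell_\infty(B_{F^*}))$, with some limit $T$. Using the standard comparison $\|\cdot\| \leq \|\cdot\|_{\mathcal{Q}}$ on each component, which follows from the ideal axiom combined with the normalization $\|\lambda \mapsto \lambda^m\|_{\mathcal{Q}} = 1$ by the one-dimensional reduction $y^* \circ P \circ (\lambda \mapsto \lambda x) = y^*(P(x))\cdot(\lambda \mapsto \lambda^m)$, one sees $I_F \circ P_n \to T$ uniformly on the unit ball; therefore $T(E)$ lies in the closed subspace $I_F(F)$, and $P := I_F^{-1} \circ T \in \mathcal{P}(^mE;F)$ is the required limit, with $I_F \circ P = T \in \mathcal{Q}(^mE;\ell_\infty(B_{F^*}))$ and $\|P - P_n\|_{\mathcal{Q}^{inj}} = \|T - I_F \circ P_n\|_{\mathcal{Q}} \to 0$.
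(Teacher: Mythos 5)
Your proposal is correct and follows essentially the same route as the paper: parts (a), (c) and (d) are reduced to Proposition \ref{inje} and Corollary \ref{corinje}, and part (b) is proved exactly as in the paper via $I_F\circ P\in\mathcal{Q}\subseteq\mathcal{R}$ and the norm comparison $\|I_F\circ P\|_{\mathcal{R}}\leq\|I_F\circ P\|_{\mathcal{Q}}$. The only difference is that you spell out the ``standard arguments'' (norm axioms and completeness of $\mathcal{Q}^{inj}(^mE;F)$ via the isometric embedding $P\mapsto I_F\circ P$ and closedness of $I_F(F)$) that the paper delegates to Proposition \ref{inje} without displaying them, and these details are sound.
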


\begin{proof} (a) and (d) follow from Proposition \ref{inje} and (c) follows from Corollary \ref{corinje}. To prove (b), let $P\in\mathcal{Q}^{inj}(^m E;F)$ be given. Thus $I_F\circ P\in \mathcal{Q}(^m E;\ell_{\infty}(B_{F^{*}}))\subseteq \mathcal{R}(^m E;\ell_{\infty}(B_{F^{*}}))$, what gives $P\in\mathcal{R}^{inj}(^m E;F)$. Moreover,
$$\|P\|_{\mathcal{R}^{inj}}=\|I_F\circ P\|_{\mathcal{R}}\leq\|I_F\circ P\|_{\mathcal{Q}}=\|P\|_{\mathcal{Q}^{inj}}.$$
\end{proof}

\section{The domination property}

Injective operator ideals are characterized by the following domination property:

\begin{proposition}\label{linear}{\rm \cite[Exercise 9.10(b)]{df}, \cite[Lemma 3.1]{pacific}} An operator ideal $\cal I$ is injective if and only if given operators $u\in \mathcal{I}(E;F)$ and $v\in \mathcal{L}(E;G)$ such that
	$$\left\|v(x)\right\| \leq C\cdot \left\|u(x)\right\| $$
	for every $x\in E$ and some constant $C \geq 0$ (eventually depending on $E$, $F$, $G$, $u$, $v$), then $v\in \mathcal{I}(E;G)$.
\end{proposition}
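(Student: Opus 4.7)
The plan is to prove the equivalence directly from the definition of injective operator ideal, with the forward implication being essentially formal and the reverse implication requiring a factorization through the closure of the range of $u$ together with the metric extension property of $\ell_\infty$ spaces. For the implication $(\Leftarrow)$, I would assume the domination property and let $v \in \mathcal{L}(E;F)$ and a metric injection $j \colon F \longrightarrow G$ be such that $j \circ v \in \mathcal{I}(E;G)$. Apply the domination hypothesis with $j \circ v$ playing the role of the ideal member and $v$ that of the dominated operator (the target spaces $G$ and $F$ interchanging roles). Since $j$ is a metric injection, $\|v(x)\| = \|j(v(x))\|$ for every $x \in E$, so the domination is satisfied with $C = 1$, and the hypothesis yields $v \in \mathcal{I}(E;F)$.

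For the implication $(\Rightarrow)$, assume $\mathcal{I}$ is injective and take $u \in \mathcal{I}(E;F)$, $v \in \mathcal{L}(E;G)$ with $\|v(x)\| \leq C\, \|u(x)\|$ for every $x \in E$. The domination first gives $u(x) = u(y) \Rightarrow v(x) = v(y)$, so the prescription $w(u(x)) := v(x)$ defines a linear map $w \colon u(E) \longrightarrow G$ with $\|w\| \leq C$, which extends by continuity to $\overline{u(E)} \subseteq F$. The key step is then to move from $F$ to a space with better extension properties: composing with the canonical metric injection $I_G \colon G \longrightarrow \ell_\infty(B_{G^*})$ and invoking the metric extension property of $\ell_\infty(B_{G^*})$ (as already used in the preceding proof via \cite[Proposition C.3.2.1]{livropietsch}), one obtains a bounded linear operator $\widetilde{w} \colon F \longrightarrow \ell_\infty(B_{G^*})$ whose restriction to $\overline{u(E)}$ coincides with $I_G \circ w$. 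Then $\widetilde{w} \circ u = I_G \circ v$ on $E$, so the ideal property forces $I_G \circ v \in \mathcal{I}(E; \ell_\infty(B_{G^*}))$, and the injectivity of $\mathcal{I}$ applied to the metric injection $I_G$ finally yields $v \in \mathcal{I}(E;G)$.

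The only real obstacle lies in the $(\Rightarrow)$ direction: the natural factorization of $v$ through $\overline{u(E)}$ cannot in general be extended to a map with values in $G$, so one has to pass through an $\ell_\infty$ space, which does enjoy the requisite extension property, and only at the very end use the assumed injectivity of $\mathcal{I}$ to descend back to $G$.
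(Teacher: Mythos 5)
Your proof is correct. The paper does not actually prove this proposition---it is quoted from \cite[Exercise 9.10(b)]{df} and \cite[Lemma 3.1]{pacific}---but your argument (the easy direction via $\|v(x)\|=\|j(v(x))\|$ with $C=1$; the hard direction by factoring $v$ through $\overline{u(E)}$, extending into $\ell_\infty(B_{G^*})$ by the metric extension property, and then descending to $G$ by injectivity) is the standard one and is exactly the scheme the paper itself employs for the polynomial analogue in Theorem~\ref{thdp}.
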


Transposing the linear domination property above literally to the polynomial case, we end up with the following:

\begin{definition}\rm A polynomial ideal $\cal Q$ is said to have the {\it weak domination property} if given polynomials $P\in \mathcal{Q}(^m E;F)$ and $Q\in \mathcal{P}(^m E;G)$ such that
	$$\left\|Q(x)\right\| \leq C\cdot \left\|P(x)\right\| $$
	for every $x\in E$ and some constant $C \geq 0$ (eventually depending on $E$, $F$, $G$, $P$, $Q$, $m$), then $Q\in \mathcal{Q}(^m E;G)$.
\end{definition}

Given a polynomial $P \in {\cal P}(^mE;F)$ and a metric injection $j \colon F \longrightarrow G$, we have
$$\|P(x)\| = \|j(P(x))\| = \|(j \circ P)(x)\| $$
for every $x \in E$. So, the weak domination property is sufficient for a polynomial ideal to be injective: Every polynomial ideal with the weak domination property is injective.

In the linear case, the proof that every injective operator ideal has the domination property depends heavily on the linearity of the underlying operators, so it is not expected that every injective polynomial ideal has the weak domination property. Indeed, in Example \ref{exfinal} we shall give an example of an injective polynomial ideal failing the weak domination property, which establishes that this property does not characterize injective polynomial ideals. This poses two questions: Can injective polynomial ideals be characterized by some related domination property? If yes, is this characterization useful? Next we answer these two questions affirmatively. 

\begin{definition}\rm A polynomial ideal $\cal Q$ is said to have the {\it strong domination property} if given polynomials $P\in \mathcal{Q}(^m E;F)$ and $Q\in \mathcal{P}(^m E;G)$ such that
	$$\left\|\sum_{i=1}^k\lambda_i Q(x_i)\right\| \leq C\cdot \left\|\sum_{i=1}^k\lambda_i P(x_i)\right\| $$
	for all $k \in \mathbb{N}$, $x_1, \ldots, x_k \in E$, $\lambda_1, \ldots, \lambda_k\in \mathbb{K}$ and some constant $C \geq 0$ (eventually depending on $E$, $F$, $G$, $P$, $Q$, $m$), then $Q\in \mathcal{Q}(^m E;G)$.
\end{definition}

\begin{theorem}\label{thdp} A polynomial ideal is injective if and only if it has the strong domination property.
\end{theorem}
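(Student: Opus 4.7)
The plan is to prove the two implications separately. The reverse direction is immediate: suppose $\mathcal{Q}$ has the strong domination property and let $j \colon F \to G$ be a metric injection with $j \circ P \in \mathcal{Q}(^mE;G)$ for some $P \in \mathcal{P}(^mE;F)$. Since $j$ is a metric injection,
$$\Big\|\sum_{i=1}^k \lambda_i (j \circ P)(x_i)\Big\| = \Big\|j\Big(\sum_{i=1}^k \lambda_i P(x_i)\Big)\Big\| = \Big\|\sum_{i=1}^k \lambda_i P(x_i)\Big\|$$
for all $x_1,\ldots,x_k \in E$ and $\lambda_1,\ldots,\lambda_k \in \mathbb{K}$. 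Applying the strong domination property with $C=1$, taking $j \circ P$ as the $\mathcal{Q}$-polynomial and $P$ as the merely continuous one, yields $P \in \mathcal{Q}(^mE;F)$.

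For the forward implication, assume $\mathcal{Q}$ is injective and let $P \in \mathcal{Q}(^mE;F)$ and $Q \in \mathcal{P}(^mE;G)$ satisfy the displayed sum inequality with constant $C$. Let $M$ be the closed linear span of $\{P(x) : x \in E\}$ inside $F$, let $i_M \colon M \hookrightarrow F$ be the inclusion (a metric injection), and let $\widetilde{P} \colon E \to M$ be the corestriction of $P$, so that $i_M \circ \widetilde{P} = P \in \mathcal{Q}(^mE;F)$. Injectivity of $\mathcal{Q}$ then gives $\widetilde{P} \in \mathcal{Q}(^mE;M)$. On $\mathrm{span}\{P(x) : x \in E\}$, define a map $T$ by
$$T\Big(\sum_{i=1}^k \lambda_i P(x_i)\Big) := \sum_{i=1}^k \lambda_i Q(x_i).$$
The hypothesis ensures that $T$ is well-defined — if two linear combinations of $P$-values coincide in $F$, their difference vanishes and the inequality applied to the concatenated tuples forces the corresponding $Q$-combinations to coincide in $G$ — and that $\|T\| \leq C$. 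Thus $T$ extends uniquely to $\overline{T} \in \mathcal{L}(M;G)$, and by construction $Q = \overline{T} \circ \widetilde{P}$. The polynomial ideal property then delivers $Q \in \mathcal{Q}(^mE;G)$.

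The main obstacle is the construction of $T$: a pointwise inequality $\|Q(x)\| \leq C\|P(x)\|$ would not even allow $T$ to be defined consistently, because different linear combinations of the non-linear values $P(x)$ may represent the same element of $F$. It is precisely the sum-based form of the strong domination property that reconciles the non-linearity of $P$ and $Q$ with the linear structure of $F$ and $G$, making the factorization $Q = \overline{T} \circ \widetilde{P}$ meaningful. Injectivity of $\mathcal{Q}$ is used exactly once, to descend $P$ to $\widetilde{P} \in \mathcal{Q}(^mE;M)$ so that the extended operator $\overline{T}$ acts on the correct domain.
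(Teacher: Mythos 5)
Your proof is correct, and in the forward direction it takes a genuinely different route from the paper. Both arguments start identically: the sum-based inequality is exactly what makes the map $T$ (the paper calls it $W$) well defined and bounded on ${\rm span}\{P(E)\}$, and both extend it to the closed span. The difference is where injectivity of $\mathcal{Q}$ enters. The paper keeps $P$ as a polynomial into $F$, uses the metric extension property of $\ell_\infty(B_{G^*})$ to extend $J_G\circ W_1$ along the inclusion $\overline{{\rm span}\{P(E)\}}\hookrightarrow F$ to an operator $\widetilde{W_1}\in\mathcal{L}(F;\ell_\infty(B_{G^*}))$, obtains $J_G\circ Q=\widetilde{W_1}\circ P\in\mathcal{Q}$ by the ideal property, and only at the very end invokes injectivity to strip off $J_G$. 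You instead invoke injectivity at the outset: since $i_M\circ\widetilde{P}=P\in\mathcal{Q}(^mE;F)$ and $i_M$ is a metric injection, the corestriction $\widetilde{P}$ lies in $\mathcal{Q}(^mE;M)$ (note, via polarization, that $\widetilde P$ is indeed a continuous $m$-homogeneous polynomial into $M$), and then $Q=\overline{T}\circ\widetilde{P}$ together with the ideal property finishes the argument, with no extension step and no appeal to the injectivity (extension) property of $\ell_\infty$ spaces. Your version is somewhat more economical and self-contained, using only the definition of injectivity and the ideal property; the paper's version has the mild advantage of exhibiting the explicit factorization $J_G\circ Q=\widetilde{W_1}\circ P$ through the original $P$ on all of $F$, which is the pattern reused elsewhere in the paper, and it would also yield the isometric statement for normed ideals with essentially no change. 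Both proofs are sound, and your identification of the role of the sum-based inequality (versus the pointwise one) matches the paper's motivation exactly.
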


\begin{proof} Suppose that $\cal Q$ is an injective polynomial ideal and let  $P\in\mathcal{Q}(^m E;F)$ and $Q\in\mathcal{P}(^m E;G)$ be such that
	\begin{equation}
	\left\|\sum_{i=1}^k\lambda_i Q(x_i)\right\| \leq C\cdot \left\|\sum_{i=1}^k\lambda_i P(x_i)\right\|
	\label{4aast}
	\end{equation}
	for all $k \in \mathbb{N}$, $x_1, \ldots, x_k \in E$, $\lambda_1, \ldots, \lambda_k\in \mathbb{K} $ and some constant $C$. Let us see that the operator
 $$W\colon {\rm span}\{P(E)\}\subseteq F\longrightarrow G~,~ W\left(\sum_{i=1}^k\lambda_i P(x_i)\right)=\sum_{i=1}^k\lambda_i Q(x_i),$$
is well defined: indeed,
\begin{align*}\sum\limits_{i=1}^k\lambda_i P(x_i)=\sum\limits_{j=1}^l\alpha_j P(x_j) & \Longrightarrow \left\|\sum_{i=1}^k\lambda_i P(x_i)-\sum_{j=1}^l\alpha_j P(x_j)\right\|=0\\
 & \stackrel{(\ref{4aast})}{\Longrightarrow} \left\|\sum_{i=1}^k\lambda_i Q(x_i)-\sum_{j=1}^l\alpha_j Q(x_j) \right\|=0\\
 &\Longrightarrow \sum_{i=1}^k\lambda_i Q(x_i)=\sum_{j=1}^l\alpha_j Q(x_j).
 \end{align*}
The linearity of $W$ é clear and its continuity follows from
	$$\left\|W\left(\sum_{i=1}^k\lambda_i P(x_i) \right)\right\|=\left\|  \sum_{i=1}^k\lambda_i Q(x_i) \right\|\stackrel{(\ref{4aast})}{\leq}C \cdot\left\|  \sum_{i=1}^k\lambda_i P(x_i) \right\|.$$
	Then there exists a (unique) bounded linear operator $W_1\colon \overline{{\rm span}\{P(E)\}}\subseteq F\longrightarrow G$ such that $W_1|_{{\rm span}\{P(E)\}}=W$. Denoting by $i \colon \overline{{\rm span}\{P(E)\}}\longrightarrow F$ the formal inclusion operator and by $P_0 \colon E \longrightarrow {\rm span}\{P(E)\}$ the obvious polynomial, we have the diagram
	\begin{equation*}
	\begin{gathered}
	\xymatrix@C02pt@R18pt{
		E\ar[dd]_*{Q} \ar[rrrrrrrrrr]^*{P_0} & & & & &  & & & & & \overline{{\rm span}\{P(E)\}}\ar@{-->}[rrrrrrrrrrdd]_{J_G\circ W_1}\ar[lllllllllldd]_{W_1} \ar[rrrrrrrrrr]^*{i}&&&&&&&&&&F\ar@{-->}[dd]^{\stackrel{\sim}{W_1}}\\
		&  &  &  &  & & & & &  &   &\\
		G \ar[rrrrrrrrrrrrrrrrrrrr]_{J_G}& & & & & & & &&&&&&&&& & &  & &\ell_\infty(B_{G^{*}})
	}
	\end{gathered}
	\end{equation*}
	As $i$ is a metric injection, from the metric approximation property of $\ell_\infty(B_{G^{*}})$ there exists an operator  $\stackrel{\sim}{W_1}\in\mathcal{L}(F;\ell_\infty(B_{G^{*}}))$ such that $J_G\circ W_1=\,\stackrel{\sim}{W_1}\circ\ i$ and $\|\stackrel{\sim}{W_1}\|=\|J_G\circ W_1\|$. From
	$$(W_1\circ P_0)(x)=W_1(P(x))=W(P(x))=Q(x) {\rm ~for~every~}x\in E,$$
that is $Q = W_1 \circ P_0$, we conclude that
$$J_G\circ Q= J_G \circ W_1\circ P_0=\,\stackrel{\sim}{W_1}\circ \ i\circ P_0=\,\stackrel{\sim}{W_1}\circ~ P.$$ Since $P\in \mathcal{Q}(^mE;F)$, the ideal property of $\cal Q$ gives $J_G\circ Q\in \mathcal{Q}(^mE;\ell_\infty(B_{G^{*}}))$. The injectivity of $\mathcal{Q}$ and the fact that $J_G$ is a metric injection give $Q\in\mathcal{Q}(^mE;G)$, showing that $\cal Q$ has the strong domination property.

Conversely, suppose that $\cal Q$ is a polynomial ideal with the strong domination property. Given $P \in {\cal P}(^mE;F)$ and a metric injection $j \colon F \longrightarrow G$ such that $(j \circ P)\in {\cal Q}(^mE;F)$, we have
$$\left\|\sum_{i=1}^k \lambda_iP(x_i) \right\| = \left\|j\left(\sum_{i=1}^k \lambda_i P(x_i)\right) \right\|= \left\|\sum_{i=1}^k \lambda_i(j\circ P)(x_i) \right\| $$
for all $k \in \mathbb{N}$, $x_1, \ldots, x_k \in E$ and $\lambda_1, \ldots, \lambda_k\in \mathbb{K}$. The strong domination property of $\cal Q$ gives $P \in {\cal Q}(^mE;F)$, proving that $\cal Q$ is injective.
\end{proof}

Now we apply the characterization above to establish a quite useful formula regarding composition polynomial ideals, whose definition goes back to Pietsch \cite{pietsch} and we recall now: Given an operator ideal $\cal I$, a polynomial $P \in {\cal P}(^mE;F)$ belongs to ${\cal I} \circ {\cal P}(^mE;F)$ if there exist a Banach space $G$, a polynomial $Q \in {\cal P}(^mE;G)$ and an operator $u \in {\cal I}(G;F)$ such that $P = u \circ Q$. It is well known that ${\cal I} \circ {\cal P}$ is a polynomial hyper-ideal.

\begin{theorem}\label{comp} For every operator ideal $\cal I$,
$${\cal I}^{inj} \circ {\cal P} = ({\cal I} \circ {\cal P})^{inj}. $$
In particular, the polynomial hyper-ideal ${\cal I}^{inj} \circ {\cal P}$ is injective.
\end{theorem}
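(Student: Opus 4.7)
The plan is to establish the equality $\mathcal{I}^{inj} \circ \mathcal{P} = (\mathcal{I} \circ \mathcal{P})^{inj}$ by directly computing both inclusions via the characterization of the injective hull from Proposition \ref{inje}(a), rather than going through the strong domination property of Theorem \ref{thdp}. The ``in particular'' clause is then automatic: any injective hull is injective, by Proposition \ref{hull}(c) together with Corollary \ref{corinje}.

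For the easy inclusion $\mathcal{I}^{inj} \circ \mathcal{P} \subseteq (\mathcal{I} \circ \mathcal{P})^{inj}$, take $P = u \circ Q$ with $u \in \mathcal{I}^{inj}(H;F)$ and $Q \in \mathcal{P}(^m E;H)$. Applied to the linear case $m=1$, Proposition \ref{inje} yields $I_F \circ u \in \mathcal{I}(H; \ell_\infty(B_{F^*}))$, so
$$I_F \circ P = (I_F \circ u) \circ Q \in \mathcal{I} \circ \mathcal{P}(^m E; \ell_\infty(B_{F^*})),$$
which, by that same proposition applied now to the ideal $\mathcal{I} \circ \mathcal{P}$, places $P$ in $(\mathcal{I} \circ \mathcal{P})^{inj}$.

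The harder direction $(\mathcal{I} \circ \mathcal{P})^{inj} \subseteq \mathcal{I}^{inj} \circ \mathcal{P}$ is the main obstacle. Starting from $P \in (\mathcal{I} \circ \mathcal{P})^{inj}(^m E;F)$, Proposition \ref{inje} provides a factorization $I_F \circ P = v \circ R$ with $v \in \mathcal{I}(H; \ell_\infty(B_{F^*}))$ and $R \in \mathcal{P}(^m E;H)$, and one must manufacture from this a factorization of $P$ itself through an operator in $\mathcal{I}^{inj}$. The idea is to descend $v$ along the isometry $I_F$, but only on the portion of $H$ actually reached by $R$. Set $M := \overline{\mathrm{span}\{R(E)\}} \subseteq H$, and let $R_0 \in \mathcal{P}(^m E; M)$ denote $R$ corestricted to $M$. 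The identity $v(R(x)) = I_F(P(x)) \in I_F(F)$ shows that $v$ carries $\mathrm{span}\{R(E)\}$ into the (not necessarily closed) isometric copy $I_F(F)$, so composing with $I_F^{-1}$ on that subspace produces a bounded linear map that extends by density to a unique $u_0 \in \mathcal{L}(M;F)$; the equality $P = u_0 \circ R_0$ is then immediate on $E$ by construction.

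It remains to verify $u_0 \in \mathcal{I}^{inj}(M;F)$. On $\mathrm{span}\{R(E)\}$ one has $I_F \circ u_0 = v|_M$ by definition of $u_0$, and continuity propagates this equality to all of $M$. Since $v|_M$ is the composition of $v \in \mathcal{I}$ with the inclusion $M \hookrightarrow H$, the ideal property gives $v|_M \in \mathcal{I}(M; \ell_\infty(B_{F^*}))$, so the linear case of Proposition \ref{inje} yields $u_0 \in \mathcal{I}^{inj}(M;F)$. Therefore $P = u_0 \circ R_0 \in \mathcal{I}^{inj} \circ \mathcal{P}(^m E;F)$, finishing the equality and hence the theorem.
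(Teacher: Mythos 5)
Your proof is correct, but the hard inclusion is handled by a genuinely different argument than the paper's. The paper never factors $P$ directly: it first proves that $\mathcal{I}^{inj}\circ\mathcal{P}$ has the strong domination property (passing to the linearizations $P_L, Q_L$ on $\widehat{\otimes}_{\pi}^{m,s}E$, invoking \cite[Proposition 3.2]{Geraldo.D.P} to translate membership in the composition ideal into membership of $P_L$ in $\mathcal{I}^{inj}$, and then using the linear domination characterization of injective operator ideals, Proposition \ref{linear}), concludes injectivity from Theorem \ref{thdp}, and finally obtains the equality from the sandwich $(\mathcal{I}^{inj}\circ\mathcal{P})^{inj}=\mathcal{I}^{inj}\circ\mathcal{P}\subseteq(\mathcal{I}\circ\mathcal{P})^{inj}\subseteq(\mathcal{I}^{inj}\circ\mathcal{P})^{inj}$ via Proposition \ref{hull}(b). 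You instead prove $(\mathcal{I}\circ\mathcal{P})^{inj}\subseteq\mathcal{I}^{inj}\circ\mathcal{P}$ head-on: from $I_F\circ P=v\circ R$ you descend $v$ through the isometry $I_F$ on $M=\overline{{\rm span}\{R(E)\}}$, getting $u_0\in\mathcal{L}(M;F)$ with $I_F\circ u_0=v|_M\in\mathcal{I}$, hence $u_0\in\mathcal{I}^{inj}(M;F)$ and $P=u_0\circ R_0$. This is sound — well-definedness is automatic since $u_0$ is literally $I_F^{-1}\circ v$ on the span, boundedness follows because $I_F$ is isometric, and the corestriction $R_0$ is a continuous polynomial into $M$ because the polarization formula shows its symmetric multilinear map also takes values in ${\rm span}\{R(E)\}$ — and it is more elementary and self-contained: it needs neither Theorem \ref{thdp} nor Proposition \ref{linear} nor the linearization result of \cite{Geraldo.D.P}. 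What the paper's route buys is a further illustration of the strong domination property (the paper's central theme) and injectivity of $\mathcal{I}^{inj}\circ\mathcal{P}$ as a direct byproduct, whereas you recover injectivity from the hull identities (Proposition \ref{hull}(c) plus Corollary \ref{corinje}), which is perfectly adequate. One tiny inaccuracy, harmless to the argument: $I_F(F)$ is in fact closed in $\ell_\infty(B_{F^*})$, being the isometric image of a complete space, so the parenthetical ``not necessarily closed'' could be dropped.
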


\begin{proof} Given $P \in {\cal I}^{inj}(^mE;F)$, $P = u \circ Q$ for some Banach space $G$, $Q \in {\cal P}(^mE;G)$ and $u \in {\cal I}(G;F)$. Then the factorization
$I_F \circ P = I_F \circ u  \circ Q$ with $I_F \circ u  \in {\cal I}(G; \ell_\infty(B_{G^*}))$ shows that $I_F \circ P$ belongs to ${\cal I} \circ {\cal P}$. This proves that
\begin{equation}\label{eqqe}{\cal I}^{inj} \circ {\cal P} \subseteq ({\cal I} \circ {\cal P})^{inj}. \end{equation}

Let us prove that ${\cal I}^{inj} \circ {\cal P}$ is injective. Let $P \in {\cal I}^{inj} \circ {\cal P}(^mE;F)$ and $Q \in {\cal P}(^mE;G)$ be such that
	$$\left\|\sum_{i=1}^k\lambda_i Q(x_i)\right\| \leq C\cdot \left\|\sum_{i=1}^k\lambda_i P(x_i)\right\| $$
	for all $k \in \mathbb{N}$, $x_1, \ldots, x_k \in E$, $\lambda_1, \ldots, \lambda_k\in \mathbb{K}$ and some constant $C \geq 0$. Call $P_L$ and $Q_L$ the linearizations of $P$ and $Q$ on the (completed) projective symmetric tensor product, that is $P_L \colon \widehat{\otimes}_{\pi}^{m,s}E \longrightarrow F$ and $Q_L \colon \widehat{\otimes}_{\pi}^{m,s}E \longrightarrow G$ are bounded linear operators such that
$$P_L (\otimes^m x) = P(x) {\rm ~~and~~} Q_L (\otimes^m x) = Q(x) $$
 for every $x \in E$ (see \cite{f}). Given $z = \sum\limits_{i=1}^k \lambda_i \otimes^m x_i$ in the (incomplete) symmetric tensor product ${\otimes}_{\pi}^{m,s}E$, we have
 $$\|Q_L(z)\| = \left\|\sum_{i=1}^k\lambda_i Q(x_i)\right\| \leq C\cdot \left\|\sum_{i=1}^k\lambda_i P(x_i)\right\|= C \cdot \|P_L(z)\|.$$
 The continuity of $P$, of $Q$ and of the norm give that $\|Q_L(z)\| \leq C \cdot \|P_L(z)\|$ for every $z \in \widehat{\otimes}_{\pi}^{m,s}E$.
Since $P \in {\cal I}^{inj} \circ {\cal P}(^mE;F)$, we know from \cite[Proposition 3.2]{Geraldo.D.P} that $P_L \in {\cal I}^{inj}\left(\widehat{\otimes}_{\pi}^{m,s}E;F) \right)$. Now the injectivity of ${\cal I}^{inj}$ and Proposition \ref{linear} give $Q_L \in {\cal I}^{inj}\left(\widehat{\otimes}_{\pi}^{m,s}G;F) \right)$. Calling on \cite[Proposition 3.2]{Geraldo.D.P} once again we get $Q \in {\cal I}^{inj} \circ {\cal P}(^mG;F)$, proving that ${\cal I}^{inj} \circ {\cal P}$ has the strong domination property, hence it is injective by Theorem \ref{thdp}. Combining this with (\ref{eqqe}), with ${\cal I}\subseteq {\cal I}^{inj}$ and with Proposition \ref{hull}(b), we get
$$\left({\cal I}^{inj} \circ {\cal P}\right)^{inj} = {\cal I}^{inj} \circ {\cal P}  \subseteq ({\cal I} \circ {\cal P})^{inj} \subseteq \left({\cal I}^{inj} \circ {\cal P}\right)^{inj},$$
which gives the desired formula. The second assertion follows from Proposition \ref{inje}.
\end{proof}

\begin{corollary}\label{corol} The following are equivalent for an operator ideal $\cal I$:\\
{\rm (a)} $\cal I$ is an injective operator ideal.\\
{\rm (b)} ${\cal I} \circ {\cal P}$ is an injective polynomial hyper-ideal.\\
{\rm (c)} $({\cal I} \circ {\cal P})_m$ is an injective ideal of $m$-homogeneous polynomials for some $m \in \mathbb{N}$.
\end{corollary}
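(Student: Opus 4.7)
The plan is the cyclic chain (a) $\Rightarrow$ (b) $\Rightarrow$ (c) $\Rightarrow$ (a). The first two steps are routine: (a) $\Rightarrow$ (b) is immediate from Theorem \ref{comp}, since ${\cal I} = {\cal I}^{inj}$ forces ${\cal I}\circ{\cal P} = ({\cal I}\circ{\cal P})^{inj}$, which by Corollary \ref{corinje} means ${\cal I}\circ{\cal P}$ is injective; and (b) $\Rightarrow$ (c) holds because injectivity of a polynomial (hyper-)ideal is a degree-wise condition that descends automatically to each component $({\cal I}\circ{\cal P})_m$.

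For (c) $\Rightarrow$ (a), fix $m$ witnessing (c). The case $m=1$ is trivial since $({\cal I}\circ{\cal P})_1 = {\cal I}$ (any $u \in {\cal L}(E;F)$ factors as $u\circ \mathrm{id}_E$). For $m \geq 2$, given $u \in {\cal L}(E;F)$ and a metric injection $j\colon F\to G$ with $j\circ u \in {\cal I}(E;G)$, the idea is to encode $u$ into a degree-$m$ polynomial, apply injectivity at level $m$, and then invert the encoding. Concretely, pick (via Hahn--Banach, assuming $E\neq\{0\}$) some $\varphi\in E^*$ and $x_0\in E$ with $\varphi(x_0) = 1$, and set $P(x) := \varphi(x)^{m-1}u(x) \in {\cal P}(^m E;F)$. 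Then $j\circ P = (j\circ u)\circ R$ where $R(x):=\varphi(x)^{m-1} x$ is $m$-homogeneous, so $j\circ P \in {\cal I}\circ{\cal P}(^m E;G)$. The injectivity assumption yields $P \in {\cal I}\circ{\cal P}(^m E;F)$, and by \cite[Proposition 3.2]{Geraldo.D.P} (the linearization principle already invoked in the proof of Theorem \ref{comp}) its linearization satisfies $P_L \in {\cal I}\bigl(\widehat{\otimes}_\pi^{m,s}E;F\bigr)$.

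To recover $u$, consider the bounded linear map $T\colon E \to \widehat{\otimes}_\pi^{m,s}E$ sending $x$ to the symmetric tensor with $x$ in one slot and $x_0$ in the remaining $m-1$ slots. A direct polarization calculation using $\varphi(x_0)=1$ gives
$$(P_L\circ T)(x) = \tfrac{1}{m}\,u(x) + \tfrac{m-1}{m}\,\varphi(x)\,u(x_0),$$
whence $u = m\,(P_L\circ T) - (m-1)\,\varphi(\cdot)\,u(x_0)$. The first summand lies in ${\cal I}$ by the ideal property and the second is a rank-one operator, hence also in ${\cal I}$; so $u \in {\cal I}(E;F)$, as required. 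I expect the main obstacle to be choosing $P$: one needs a construction from $u$ such that $j\circ P$ factors trivially through $j\circ u$ (giving membership in ${\cal I}\circ{\cal P}$) while $P_L$ still admits a linear retraction back to $u$ modulo a finite-rank correction. The scalar weight $\varphi(x)^{m-1}$ accomplishes both at once, after which the rest is standard interplay between polynomials, tensor-product linearizations, and the ideal axioms.
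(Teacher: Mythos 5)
Your proof is correct. The implications (a)\,$\Rightarrow$\,(b) and (b)\,$\Rightarrow$\,(c) are handled exactly as in the paper: Theorem \ref{comp} plus Corollary \ref{corinje}, and the observation that injectivity is a component-wise condition. The difference is in (c)\,$\Rightarrow$\,(a): the paper disposes of it with a single citation to \cite[Lemma 3.4]{Geraldo.D.P}, whereas you prove it from scratch. Your device --- encoding $u$ as $P=\varphi(\cdot)^{m-1}u$ so that $j\circ P=(j\circ u)\circ R$ lies in ${\cal I}\circ{\cal P}$, invoking injectivity of the $m$-th component, passing to $P_L\in{\cal I}(\widehat{\otimes}_{\pi}^{m,s}E;F)$ via \cite[Proposition 3.2]{Geraldo.D.P}, and then recovering $u=m\,(P_L\circ T)-(m-1)\,\varphi(\cdot)\,u(x_0)$ from the symmetric multilinear associate, the finite-rank correction being absorbed by the ideal --- is essentially the content of the cited lemma, so in substance the route is the same; what your version buys is self-containment (modulo the linearization principle, which the paper already uses in Theorem \ref{comp}), at the cost of the polarization bookkeeping. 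One cosmetic caveat: the exact constants in $(P_L\circ T)(x)=\frac{1}{m}u(x)+\frac{m-1}{m}\varphi(x)u(x_0)$ depend on normalizing $T(x)$ as the \emph{averaged} symmetrization of $x\otimes x_0^{\otimes(m-1)}$; with a different normalization the constants change, but $u$ is still a scalar multiple of $P_L\circ T$ plus a rank-one operator, so the argument is unaffected. Also fine are your dispositions of the trivial cases $m=1$ (where $({\cal I}\circ{\cal P})_1={\cal I}$) and $E=\{0\}$.
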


\begin{proof} (a) $\Longrightarrow$ (b) follows from Theorem \ref{comp} and Corollary \ref{corinje}, (b) $\Longrightarrow$ (c) is obvious and (c) $\Longrightarrow$ (a) follows from \cite[Lemma 3.4]{Geraldo.D.P}.
\end{proof}

Let $\cal F$, ${\cal K}$ and ${\cal W}$ denote the injective ideals of finite rank, compact and weakly compact polynomials. Since ${\cal P}_{\cal F} = {\cal F} \circ {\cal P}$ \cite[Lemma 2.1]{jmaaleticia}, ${\cal P}_{\cal K} = {\cal K} \circ {\cal P}$ \cite[Proposition 4.1]{ryan.tese} and ${\cal P}_{\cal W} = {\cal W} \circ {\cal P}$ \cite[Proposition 4.1]{ryan.tese}, the corollary above gives another proof that the polynomial ideals of finite rank, compact and weakly compact polynomials are injective.

Now we compute the injective hull of the closed polynomial ideal ${\cal P}_{\cal A}$ of polynomials that can be approximated, in the usual sup norm, by polynomials of finite rank, that is, ${\cal P}_{\cal A} = \overline{{\cal P}_{\cal F}}$.

\begin{corollary}\label{approxima} $({\cal P}_{\cal A})^{inj} = {\cal P}_{\cal K}$.
\end{corollary}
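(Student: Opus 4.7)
The plan is to prove the two inclusions of $(\mathcal{P}_{\cal A})^{inj} = \mathcal{P}_{\cal K}$ separately, using the explicit description of the injective hull provided by Proposition \ref{inje}(a) together with the fact (recorded in the introductory example, and also available through Corollary \ref{corol} applied to $\mathcal{I}=\mathcal{K}$) that $\mathcal{P}_{\cal K}$ is injective.

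For the inclusion $(\mathcal{P}_{\cal A})^{inj}\subseteq \mathcal{P}_{\cal K}$, I would start from the elementary observation that every finite rank polynomial is compact and that $\mathcal{P}_{\cal K}$ is closed in the sup norm, so $\mathcal{P}_{\cal A}=\overline{\mathcal{P}_{\cal F}}\subseteq \mathcal{P}_{\cal K}$. Applying the monotonicity of the hull (Proposition \ref{hull}(b)) and $(\mathcal{P}_{\cal K})^{inj}=\mathcal{P}_{\cal K}$ from Corollary \ref{corinje}, I conclude $(\mathcal{P}_{\cal A})^{inj}\subseteq (\mathcal{P}_{\cal K})^{inj}=\mathcal{P}_{\cal K}$.

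For the reverse inclusion, let $P\in\mathcal{P}_{\cal K}(^mE;F)$. Using the identification $\mathcal{P}_{\cal K}=\mathcal{K}\circ\mathcal{P}$, I would factor $P=u\circ Q$ with $u\in\mathcal{K}(G;F)$ and $Q\in\mathcal{P}(^mE;G)$, so that $I_F\circ P=(I_F\circ u)\circ Q$ with $I_F\circ u\in\mathcal{K}(G;\ell_\infty(B_{F^*}))$. Now I would invoke the classical fact that $\ell_\infty(B_{F^*})$ has the metric approximation property, whence every compact operator into that space is approximable. This yields finite rank operators $u_k$ with $\|u_k-I_F\circ u\|\to 0$; the finite rank polynomials $u_k\circ Q$ then satisfy
$$\|I_F\circ P-u_k\circ Q\|\leq \|I_F\circ u-u_k\|\cdot\|Q\|^m\longrightarrow 0,$$
showing that $I_F\circ P\in\mathcal{P}_{\cal A}(^mE;\ell_\infty(B_{F^*}))$. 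By Proposition \ref{inje}(a) this says exactly that $P\in(\mathcal{P}_{\cal A})^{inj}(^mE;F)$.

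The main non-formal ingredient, and the step that would merit a careful citation, is the approximation property of $\ell_\infty(B_{F^*})$; the rest of the argument is routine, and the strong domination property of Theorem \ref{thdp} is not actually needed here, since the hull description via the canonical metric injection is the more efficient tool. This argument is the polynomial avatar of the classical linear identity $\mathcal{A}^{inj}=\mathcal{K}$.
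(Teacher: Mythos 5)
Your argument is correct, but it follows a genuinely different route from the paper. The paper's proof is a one-line application of its main composition formula (Theorem \ref{comp}): it cites ${\cal P}_{\cal A}={\cal A}\circ{\cal P}$ and the classical linear identity ${\cal A}^{inj}={\cal K}$, and then computes $({\cal P}_{\cal A})^{inj}=({\cal A}\circ{\cal P})^{inj}={\cal A}^{inj}\circ{\cal P}={\cal K}\circ{\cal P}={\cal P}_{\cal K}$; the corollary is thus presented as a showcase for Theorem \ref{comp}. You instead prove the two inclusions directly from the hull description of Proposition \ref{inje}(a): the easy inclusion via ${\cal P}_{\cal A}\subseteq{\cal P}_{\cal K}$, monotonicity of the hull and the injectivity of ${\cal P}_{\cal K}$, and the hard inclusion by factoring $P=u\circ Q$ through ${\cal P}_{\cal K}={\cal K}\circ{\cal P}$ and approximating the compact operator $I_F\circ u$ by finite rank operators, using the metric approximation property of $\ell_\infty(B_{F^*})$. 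What your approach buys is self-containment: you avoid both Theorem \ref{comp} and the nontrivial identification ${\cal P}_{\cal A}={\cal A}\circ{\cal P}$ (in effect you reprove the relevant half of ${\cal A}^{inj}={\cal K}$ from the AP of $\ell_\infty(\Gamma)$), at the cost of not exhibiting the corollary as a consequence of the composition formula, which is the paper's point. One small slip: in the estimate $\|I_F\circ P-u_k\circ Q\|=\|(I_F\circ u-u_k)\circ Q\|$ the correct bound is $\|I_F\circ u-u_k\|\cdot\|Q\|$, not $\|I_F\circ u-u_k\|\cdot\|Q\|^m$ (the exponent $m$ appears when composing on the \emph{right} with a map into the domain, not on the left); the inequality as written can even fail when $\|Q\|<1$, but the correct bound still tends to zero, so the conclusion is unaffected.
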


\begin{proof} Denoting by ${\cal A} = \overline{\cal F}$ the ideal of operators that can be approximated, in the usual sup norm, by finite rank operators, since ${\cal A}^{inj} = {\cal K}$ \cite[Proposition 19.2.3]{jarchow} and ${\mathcal P}_{\cal
A} = {\cal A}\circ {\mathcal P}$ \cite[Theorem 2.2]{jmaaleticia}, the proof follows from Theorem \ref{comp}:
$$ ({\mathcal P}_{\cal
A})^{inj} = ({\cal A}\circ {\mathcal P})^{inj}= {\cal A}^{inj}\circ {\mathcal P} = {\cal K} \circ {\cal P} =
\mathcal{P}_{\cal K}.$$
\end{proof}

The next application concerns the polynomial dual ${\cal I}^{{\cal P}{-dual}}$ of a given operator ideal $\cal I$ defined in \cite{Geraldo.C.M} as
$${\cal I}^{{\cal P}{-dual}}(^mE;F) = \{P \in {\cal P}(^mE;F) : P^* \in {\cal I}(F^*;{\cal P}(^mE))\}, $$
where $P^*$ is the Aron-Schottenloher adjoint of $P$, that is, $P^*(\varphi)(x) = \varphi(P(x))$ \cite{aron}.

An operator ideal is said to be symmetric if ${\cal I} = {\cal I}^{dual}$.

\begin{corollary} A symmetric operator ideal is injective if and only if its polynomial dual is an injective polynomial ideal.
\end{corollary}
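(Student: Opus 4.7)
My approach rests on two standard observations. First, for $m=1$ we have ${\cal P}(^1 E) = E^*$ and the Aron--Schottenloher adjoint of a linear map $u$ coincides with the classical transpose, so the linear component of the polynomial dual ideal satisfies $({\cal I}^{{\cal P}{-dual}})_1 = {\cal I}^{dual}$. Second, I plan to invoke the Pietsch duality between injectivity and surjectivity of operator ideals: ${\cal I}$ is injective if and only if ${\cal I}^{dual}$ is surjective. Combined with the standing hypothesis ${\cal I} = {\cal I}^{dual}$, this means that a symmetric injective ${\cal I}$ is automatically surjective as well.

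For the ``if'' direction I would specialize the defining injectivity condition of ${\cal I}^{{\cal P}{-dual}}$ to $m=1$. That specialization is literally the linear injectivity of the first component, which by the identification above equals ${\cal I}^{dual}$; symmetry then transfers injectivity to ${\cal I}$ itself.

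For the ``only if'' direction I would take ${\cal I}$ symmetric and injective, hence also surjective by the duality recalled above. Given $P \in {\cal P}(^m E;F)$ and a metric injection $j\colon F \longrightarrow G$ with $j \circ P \in {\cal I}^{{\cal P}{-dual}}(^m E;G)$, the hypothesis reads $(j \circ P)^* \in {\cal I}(G^*;{\cal P}(^m E))$. A direct unravelling $(j \circ P)^*(\varphi)(x) = \varphi(j(P(x))) = (j^*\varphi)(P(x))$ gives the factorization $(j \circ P)^* = P^* \circ j^*$. Because $j$ is a metric injection, Hahn--Banach ensures that $j^* \colon G^* \longrightarrow F^*$ is a metric surjection, and the surjectivity of ${\cal I}$ applied to $P^* \circ j^*$ then forces $P^* \in {\cal I}(F^*; {\cal P}(^m E))$, i.e.\ $P \in {\cal I}^{{\cal P}{-dual}}(^m E;F)$, as required.

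The only delicate point is the correct invocation of Pietsch's injective/surjective duality for operator ideals, together with the observation that the adjoint of a metric injection is a metric surjection; both are standard, so no appeal to Theorem \ref{thdp} or to the strong domination property seems necessary here, although the latter would provide an alternative route for the ``only if'' direction once $P^* \in {\cal I}$ has been produced.
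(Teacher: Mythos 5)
Your argument is correct, but it takes a genuinely different route from the paper. The paper's proof is a one-liner: it quotes the identity ${\cal I}^{{\cal P}\text{-}dual} = {\cal I}^{dual} \circ {\cal P}$ from \cite[Theorem 2.2]{Geraldo.C.M}, uses symmetry to rewrite this as ${\cal I} \circ {\cal P}$, and then invokes Corollary \ref{corol} (which itself rests on Theorem \ref{comp}, the strong domination property and a lemma from \cite{Geraldo.D.P}). You instead work directly with the Aron--Schottenloher adjoint: for the ``if'' direction you observe that $({\cal I}^{{\cal P}\text{-}dual})_1 = {\cal I}^{dual}$ and that injectivity of a polynomial ideal passes to its linear component, and for the ``only if'' direction you use the factorization $(j\circ P)^* = P^*\circ j^*$, the fact that $j^*$ is a metric surjection, and the surjectivity of ${\cal I}$, which follows from symmetry and the (valid) implication ``${\cal I}$ injective $\Rightarrow$ ${\cal I}^{dual}$ surjective''. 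Both directions check out, and your route is more elementary and self-contained, bypassing the composition-ideal machinery entirely; the paper's route, by contrast, buys the corollary for free from results it has already established and keeps it within the composition-ideal framework used throughout Section 3. One small imprecision: you state the Pietsch duality as an equivalence (``${\cal I}$ injective if and only if ${\cal I}^{dual}$ surjective''), but only the forward implication is valid in general (the converse would force ${\cal I} = {\cal I}^{inj}$ from information about ${\cal I}^{dual}$ alone, which fails since dualization is not faithful); fortunately that forward implication is the only one you actually use, so this does not affect the proof.
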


\begin{proof} Since ${\cal I}^{{\cal P}{-dual}} = {\cal I}^{dual} \circ {\cal P}$ for every operator ideal $\cal I$ \cite[Theorem 2.2]{Geraldo.C.M}, the result follows from Corollary \ref{corol}.
\end{proof}

Now we characterize the polynomial duals of the ideal $\mathfrak{K}_p$ of $p$-compact operators (see \cite{pietschpams}) and of the ideal ${\cal D}_p$ of Cohen strongly $p$-summing operators (see \cite{pacific, cohen}). $\mathfrak{N}_p$ stands for the ideal of $p$-nuclear operators and $\mathfrak{I}_p$ for the ideal of $p$-integral operators (see \cite{livropietsch}).

\begin{corollary} $\mathfrak{K}_p^{{\cal P}-dual} = (\mathfrak{N}_p \circ {\cal P})^{inj}$ and ${\cal D}_p^{{\cal P}-dual} = (\mathfrak{I}_{p^*} \circ {\cal P})^{inj}$.
\end{corollary}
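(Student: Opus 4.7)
The plan is to reduce both identities to Theorem \ref{comp} by using the formula ${\cal I}^{{\cal P}-dual} = {\cal I}^{dual} \circ {\cal P}$ from \cite[Theorem 2.2]{Geraldo.C.M} (already invoked in the previous corollary). Once this reduction is in place, what remains is simply to recognize the linear duals $\mathfrak{K}_p^{dual}$ and ${\cal D}_p^{dual}$ as injective hulls of the ideals $\mathfrak{N}_p$ and $\mathfrak{I}_{p^*}$, respectively, and then apply Theorem \ref{comp}.

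For the first identity, I would invoke the well-known characterization (due to Sinha--Karn, see also Delgado--Pi\~neiro--Serrano) that an operator $u$ is $p$-compact if and only if $u^*$ is quasi $p$-nuclear, together with the elementary fact $\mathfrak{QN}_p = \mathfrak{N}_p^{inj}$. These combine to give $\mathfrak{K}_p^{dual} = \mathfrak{N}_p^{inj}$, and thus
$$\mathfrak{K}_p^{{\cal P}-dual} = \mathfrak{K}_p^{dual} \circ {\cal P} = \mathfrak{N}_p^{inj} \circ {\cal P} = (\mathfrak{N}_p \circ {\cal P})^{inj},$$
where the last equality is Theorem \ref{comp}.

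For the second identity, the input is Cohen's original duality theorem \cite{cohen}: $u$ is strongly $p$-summing if and only if $u^*$ is absolutely $p^*$-summing, i.e.\ ${\cal D}_p^{dual} = \Pi_{p^*}$. Combined with the classical identification $\Pi_{p^*} = \mathfrak{I}_{p^*}^{inj}$ (see \cite{livropietsch}), this yields ${\cal D}_p^{dual} = \mathfrak{I}_{p^*}^{inj}$, and another application of Theorem \ref{comp} gives
$${\cal D}_p^{{\cal P}-dual} = {\cal D}_p^{dual} \circ {\cal P} = \mathfrak{I}_{p^*}^{inj} \circ {\cal P} = (\mathfrak{I}_{p^*} \circ {\cal P})^{inj}.$$

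The main obstacle is not argumentative but bibliographic: pinning down the exact references for the two dual-ideal identifications ($\mathfrak{K}_p^{dual} = \mathfrak{N}_p^{inj}$ and ${\cal D}_p^{dual} = \mathfrak{I}_{p^*}^{inj}$) in the precise form needed here, so that the chain ${\cal I}^{{\cal P}-dual} = {\cal I}^{dual} \circ {\cal P} = {\cal J}^{inj} \circ {\cal P} = ({\cal J} \circ {\cal P})^{inj}$ may be applied without slack. All other steps are formal consequences of Theorem \ref{comp} and the definitions.
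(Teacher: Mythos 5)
Your proof is correct and follows essentially the same route as the paper: reduce via ${\cal I}^{{\cal P}-dual} = {\cal I}^{dual}\circ{\cal P}$, identify $\mathfrak{K}_p^{dual}=\mathfrak{N}_p^{inj}$ and ${\cal D}_p^{dual}=\Pi_{p^*}=\mathfrak{I}_{p^*}^{inj}$, and finish with Theorem \ref{comp}; the paper simply cites \cite[Theorem 6]{pietschpams} for the first identification, where you assemble it from the Sinha--Karn/Delgado--Pi\~neiro--Serrano duality together with $\mathfrak{QN}_p=\mathfrak{N}_p^{inj}$. One small point of care: the duality you quote ($u\in\mathfrak{K}_p$ iff $u^*\in\mathfrak{QN}_p$) literally says $\mathfrak{K}_p=\mathfrak{QN}_p^{dual}$, so to obtain $\mathfrak{K}_p^{dual}=\mathfrak{QN}_p$ you should invoke the companion statement ($u$ is quasi $p$-nuclear iff $u^*$ is $p$-compact), also due to Delgado--Pi\~neiro--Serrano, or cite \cite[Theorem 6]{pietschpams} directly as the paper does.
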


\begin{proof} In
$$\mathfrak{K}_p^{{\cal P}-dual} = \mathfrak{K}_p^{dual} \circ {\cal P}= \mathfrak{N}_p^{inj} \circ {\cal P}=   (\mathfrak{N}_p \circ {\cal P})^{inj}, $$
the first equality follows from \cite[Theorem 2.2]{Geraldo.C.M}, the second from \cite[Theorem 6]{pietschpams} and the third from Theorem \ref{comp}; and in
$${\cal D}_p^{{\cal P}-dual} = {\cal D}_p^{dual} \circ {\cal P} = \Pi_{p^*} \circ {\cal P}  = \mathfrak{I}_{p^*}^{inj} \circ {\cal P}= (\mathfrak{I}_{p^*} \circ {\cal P})^{inj}, $$
$\Pi_{p^*}$ denotes the ideal of absolutely $p^*$-summing operators, the first equality follows from \cite[Theorem 2.2]{Geraldo.C.M}, the second from \cite{cohen}, the third from \cite[Theorem 19.2.7]{livropietsch} and the fourth from Theorem \ref{comp}.
\end{proof}

Our final application is the promised example of an injective polynomial ideal failing the weak domination property, which establishes, in particular, that the weak and the strong domination properties are not equivalent.

\begin{example}\label{exfinal}\rm Consider the injective closed operator ideal $\cal CC$ of completely continuous operators (weakly convergent sequences are sent to norm convergent sequences) and the continuous 2-homogeneous polynomials $R \colon \ell_2 \longrightarrow \ell_1$ and $Q \colon \ell_2 \longrightarrow \ell_2 \widehat{\otimes}_\pi^s \ell_2$ given by
$$R((\lambda_j)_j) = (\lambda_j^2)_j {\rm ~~and~~} Q(x) = x \otimes x. $$
In \cite[Example 2.10]{ryan.livro} it is proved that the operator
$$u \colon \ell_1 \longrightarrow  \ell_2 \widehat{\otimes}_\pi^s \ell_2~,~u((\lambda_j)_j) = \sum_{j}\lambda_j e_j \otimes e_j,$$
where $(e_j)_j$ are the canonical unit vectors, is an isometric isomorphism into (or, equivalently, a metric injection). The fact that $\ell_1$ is a Schur space guarantees that $u \in {\cal CC}(\ell_1; \ell_2 \widehat{\otimes}_\pi^s)$, hence $P : = u \circ R \in {\cal CC}\circ {\cal P}(^2 \ell_2; \ell_2 \widehat{\otimes}_\pi^s \ell_2)$. Since $\ell_2 \widehat{\otimes}_\pi^s \ell_2$ contains a (complemented) copy of $\ell_2$ (see \cite{fernandoblasco}), we know that
$\ell_2 \widehat{\otimes}_\pi^s \ell_2$ is not a Schur space, that is, $id_{\ell_2 \widehat{\otimes}_\pi^s \ell_2} = Q_L$ does not belong to $\cal CC$. By \cite[Proposition 3.2]{Geraldo.D.P} we conclude that $Q$ does not belong to ${\cal CC} \circ {\cal P}$. Moreover, for every $(\lambda_j)_j \in \ell_2$,
$$\|P((\lambda_j)_j)\| = \|u(R((\lambda_j)_j))\| = \|u((\lambda_j^2)_j)\| = \|(\lambda_j^2)_j\|_{\ell_1} = \|(\lambda_j^2)_j\|_{\ell_2}^2 = \|Q((\lambda_j)_j)\|. $$
So, $P$ belongs to ${\cal CC}\circ {\cal P}$, $\|P(x)\| = \|Q(x)\|$ for every $x$ but $Q$ does not belong to  ${\cal CC}\circ {\cal P}$, proving that  ${\cal CC}\circ {\cal P}$ fails the weak domination property. The example is complete because  ${\cal CC}\circ {\cal P}$ is an injective polynomial ideal by Corollary \ref{corol}.
\end{example}

\bigskip

\noindent Faculdade de Matem\'atica~~~~~~~~~~~~~~~~~~~~~~Departamento de Matem\'atica\\
Universidade Federal de Uberl\^andia~~~~~~~~ IMECC-UNICAMP\\
38.400-902 -- Uberl\^andia -- Brazil~~~~~~~~~~~~ 13.083-859 - Campinas -- Brazil\\
e-mail: botelho@ufu.br ~~~~~~~~~~~~~~~~~~~~~~~~~e-mail: leodan.ac.t@gmail.com

\end{document}